\newcommand{\<}{\langle}
\renewcommand{\>}{\rangle}
\renewcommand{\th}{\theta}
\newcommand{\cA}{{\mathcal A}}
\newcommand{\cB}{{\mathcal B}}
\newcommand{\cH}{{\mathcal H}}
\newcommand{\cK}{{\mathcal K}}
\newcommand{\cL}{{\mathcal L}}
\newcommand{\cS}{{\mathcal S}}
\newcommand{\cV}{{\mathcal V}}
\newcommand{\cW}{{\mathcal W}}
\newtheorem{theorem}{Theorem}[section]
\newtheorem{proposition}[theorem]{Proposition} 
\theoremstyle{definition}   
\newtheorem{definition}[theorem]{Definition}
\begin{document}

\title
{Concrete realizations of quotients of operator spaces}
\author{Marc A. Rieffel}
\address{Department of Mathematics \\
University of California \\ Berkeley, CA 94720-3840}
\email{rieffel@math.berkeley.edu}
\thanks{The research reported here was
supported in part by National Science Foundation grant DMS-0753228.}

\subjclass[2000]{Primary 46L07; Secondary 47L25}
\keywords{operator space, quotient, C*-subalgebra, concrete realization, complete isometry, commutator, $*$-subspace}

\begin{abstract}
Let  $\cB$ be a unital C*-subalgebra 
of a unital C*-algebra $\cA$, so that $\cA/\cB$ is an abstract
operator space. We show how to realize $\cA/\cB$ as a concrete
operator space by means of a completely contractive map
from $\cA$ into the algebra of operators on a Hilbert space,
of the form $A \mapsto [Z, A]$ where $Z$ is a Hermitian unitary
operator. We do not use Ruan's theorem concerning concrete
realization of abstract operator spaces. 
Along the way we obtain corresponding results for abstract
operator spaces of the form $\cA/\cV$ where $\cV$ is a
closed subspace of $\cA$, and then for the more special
cases in which $\cV$ is a $*$-subspace or an operator system.
\end{abstract}

\maketitle
\allowdisplaybreaks

\section*{Introduction}

In a recent paper \cite{R24} I showed that if 
 $\cB$ is a unital C*-subalgebra 
of a unital C*-algebra $\cA$ and if $L$ is the quotient norm on $\cA/\cB$ pulled back to $\cA$,
that is,
\[
L(A) = \inf \{\|A-B\|\  :  B \in \cB\}  
\]
for $A \in \cA$, then there is a unital $*$-representation $(\cH, \pi)$ of $\cA$
and a Hermitian unitary operator $U$ on $\cH$
such that 
\[
L(A) = \|[U, \pi(A)]\|
\]
for all $A \in \cA$. The consequence of this that most interested me
is that it follows that $L$ satisfies the Leibniz 
inequality
\[
L(AC) \leq L(A)\|C\| +\|A\|L(C)
\]
for all $A, C \in \cA$. But another interesting consequence is
that the map $A \mapsto [U, \pi(A)]$ gives an isometry of
$\cA/\cB$ into $\cL(\cH)$. Now $\cA/\cB$ is actually an
operator space, in the sense of having a compatible family
of norms on all the matrix spaces over it (reviewed below),
and this suggests that one should seek a natural construction of a 
``complete isometry'' from $\cA/\cB$
into the algebra of operators on some Hilbert space (i.e. one
respecting the norms on all the matrix spaces). The main purpose
of this article is to provide such a construction. In fact, we show
that there exists a complete isometry that is again of the form
 $A \mapsto [U, \pi(A)]$. As a consequence we obtain a
``matrix Leibniz seminorm'' on $\cA$ by taking the norms of
the commutators.

Now matrix Leibniz seminorms played a crucial role in my earlier
paper \cite{R17} relating vector bundles and Gromov-Hausdorff
distance, and one of my projects is to generalize the main results
of that paper to the setting of non-commutative C*-algebras, so that
they can be applied, for example, to the setting of quantizations
of coadjoint orbits that I studied in \cite{R7, R21, R25}. 
(Matrix seminorms have already been defined and discussed in
this context in \cite{Wuw1, Wuw2, Wuw3},  but the Leibniz property was not used there.) 
Actually, for infinite-dimensional C*-algebras, 
the C*-metrics as defined in \cite{R21}
are discontinuous and only densely defined. But they are required
to be lower semi-continuous with
respect to the C*-norm, and in all of the examples that I know of one proves that they are
lower semi-continuous by showing that they are the supremum of an infinite
family of continuous Leibniz seminorms. This provides ample reason for
studying continuous Leibniz seminorms.
Thus the results of the present paper provide some interesting information about matrix Leibniz seminorms, 
and so provide a small step forward in my project.

We will actually develop some of our results in a more general context,
namely that in which $\cV$ is a closed subspace of a unital C*-algebra
$\cA$, so that $\cA/\cV$ is an abstract operator space. We show
that in this case there exists a
unital $*$-representation $(\cH, \pi)$ of $\cA$,
and projections $P$ and $Q$ on $\cH$, such that
the linear mapping $\Psi$ from $\cA$ to $\cL(\cH)$ defined
by
\[
\Psi(A) = Q\pi(A)P
\]
gives a complete isometry from $\cA/\cV$ into $\cL(\cH)$. 
To show this we do not
need to use Ruan's construction \cite{ER} of 
complete isometries from abstract
operator spaces into operator algebras (essentially because 
C*-algebras
can be considered to be concrete operator spaces, by the
Gelfand-Naimark theorem). In fact, our results immediately apply
to the situation of a concrete operator space $\cW$ and a
closed subspace $\cV$ of $\cW$, so that $\cW/\cV$ is an
abstract operator space, just by considering the unital C*-algebra
$\cA$ generated by the concrete operator space $\cW$. All of
this is discussed in Section \ref{secos}.

Now a C*-subalgebra is in particular a $*$-subspace. For this
reason we discuss in Section \ref{secstar} the situation in
which $\cV$ is a $*$-subspace of a unital C*-algebra $\cA$, so
that $\cA/\cV$ is an abstract operator $*$-space. We show
that in this case
there exist a unital $*$-representation
 $(\cH, \pi)$ of $\cA$, a projection $P$ on $\cH$, and a Hermitian unitary
 operator $U$ on $\cH$ that commutes with the representation 
 $\pi$, such that when we define the linear $*$-map $\Psi$ from
 $\cA$ into $\cL(\cH)$ by
 \[
 \Psi(A) = PU\pi(A)P
 \]
 then $\Psi$ gives a completely isometric $*$-map from $\cA/\cV$
 onto a $*$-subspace of $\cL(\cH)$. In section \ref{secsys} we then
 briefly consider the case in which $\cV$ is an operator system,
 that is, $\cV$ is a $*$-subspace of $\cA$ that contains the
 identity element of $\cA$.
 
 Finally, in Section \ref{secsubalg} we discuss the situation in
 which $\cV$ is a unital C*-subalgebra, $\cB$, as described
 above.


\section{Quotients of operator spaces}
\label{secos}

We begin by reviewing here various facts about operator spaces that
we need.
Let $\cV$ be a vector space. For each natural number $n$ we let
$M_n(\cV)$ denote the vector space of $n \times n$ matrices
with entries in $\cV$. Let $\cA$ be a C*-algebra. Then $M_n(\cA)$
is a $*$-algebra in the evident way, and it has a unique C*-algebra
norm. We always view $M_n(\cA)$ as equipped with this norm. 
If $\cV$ is a subspace of $\cA$, then for 
each natural number $n$ we equip $M_n(\cV)$ with
the restriction to $M_n(\cV)$ of the
norm on $M_n(\cA)$. The resulting family of norms on all
these matrix spaces is called a matrix norm, and when $\cV$
is equipped with this family of norms it is called a ``concrete
operator space''. Ruan \cite{ER} found
axioms that characterize such families of norms. A family
of norms that satisfy Ruan's axioms  is called an ``operator-space
matrix norm''. 
A vector space equipped with an operator-space
matrix norm (but that is not assumed to
be a subspace of a C*-algebra) is called an ``abstract operator
space''. We will not need to use Ruan's axioms, because all
of the vector spaces that we consider will either be assumed to
be subspaces of C*-algebras, or will eventually be proved
to be (at least isomorphic to) such.  

If $\cV$ and $\cW$ are vector spaces and if $\phi$ is a linear
map from $\cV$ into $\cW$, then by entry-wise application $\phi$
determines a linear map, $\phi_n$, from $M_n(\cV)$
to $M_n(\cW)$ for each $n$. 
If $\cV$ and $\cW$ are each equipped with matrix
norms, then $\phi$ is said to be ``completely contractive'' if
the norm of each $\phi_n$ is no greater than 1, and $\phi$ is
said to be a ``complete isometry'' if each $\phi_n$ is an
isometry.

If $\cV$ is a closed subspace of an operator space $\cW$, so that 
$M_n(\cV)$ is a subspace of $M_n(\cW)$ for each $n$, then
for each $n$ we can equip
$M_n(\cW)/M_n(\cV)$ with the corresponding
quotient norm, thus obtaining a ``quotient matrix norm'' on $\cW/\cV$. 
Important perspective for us is given by the fact that
$\cW/\cV$ equipped with this quotient 
matrix norm is an abstract operator space
\cite{ER}. But again, in the end we will not actually have used this fact,
though we will use this terminology, as we do already 
in the next proposition.

The main technical step for all of the results of this paper
is given by the following proposition, which is closely related to
the GNS construction. Here we denote the Banach-space
dual of a Banach space $X$ by $X'$.

\begin{proposition}
\label{ccmap}
Let $\cA$ be a unital C*-algebra, let $\cV$ be a 
closed subspace of $\cA$, and equip $\cA/\cV$ with the 
corresponding quotient matrix norm
(so that $\cA/\cV$ is 
an abstract operator space). 
For a given natural number $n$ let there be given $\psi \in (M_n(\cA))'$ 
with $\psi(M_n(\cV)) = 0$ and $\|\psi\| = 1$. Then there
exist a unital $*$-representation, $(\cH, \pi)$, of $\cA$, and two projections,
$P$ and $Q$, in $\cL(\cH)$, each of rank no greater than $n$, such that when
we define the completely contractive map $\Psi : \cA \to \cL(\cH)$ by
\[
\Psi(A) = Q\pi(A)P
\]
for $A \in \cA$, then $\Psi(\cV) = 0$ and there exist two unit vectors, $\xi$ and $\eta$, 
in $\cH^{\oplus n}$ such that
\[
\psi(C) = \<\Psi_n(C)\xi, \eta\>
\]
for all $C \in M_n(\cA)$.
\end{proposition}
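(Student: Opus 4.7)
My plan is to apply a standard GNS-type construction to $\psi$ regarded as a norm-one linear functional on the C*-algebra $\cC = M_n(\cA)$, and then repackage the output using the tensor factorization $M_n(\cA) \cong \cA \otimes M_n$. Recall the folklore fact that for any bounded linear functional of norm one on a unital C*-algebra $\cC$ there exist a unital $*$-representation $(\cK, \rho)$ of $\cC$ and unit vectors $\xi_0, \eta_0 \in \cK$ such that the functional equals $\<\rho(\cdot)\xi_0, \eta_0\>$. One way to see this is to polar-decompose $\psi$ in the predual of the enveloping von Neumann algebra $\cC^{**}$ as $\psi(\cdot) = |\psi|(v^*\cdot)$ for a partial isometry $v$, apply the usual GNS construction to the state $|\psi|$ to obtain $(\cK, \rho, \xi_0)$, and then set $\eta_0 = \tilde\rho(v)\xi_0$ using the normal extension $\tilde\rho$ of $\rho$; the identity $\|\eta_0\|^2 = |\psi|(v^*v) = 1$ follows from $\|\psi\| = 1$.

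Applied to $\cC = M_n(\cA)$, this yields a unital $*$-representation $\rho$ of $M_n(\cA)$ on some $\cK$ together with unit vectors $\xi_0, \eta_0 \in \cK$ satisfying $\psi(C) = \<\rho(C)\xi_0, \eta_0\>$. Now every unital $*$-representation of $M_n(\cA) \cong \cA \otimes M_n$ is unitarily equivalent to the $n$-fold amplification of a unital $*$-representation $(\cH, \pi)$ of $\cA$, so I may identify $\cK$ with $\cH^{\oplus n}$ and $\rho$ with $\pi_n$ acting coordinate-wise. Decomposing $\xi_0 = (\xi_1, \ldots, \xi_n)$ and $\eta_0 = (\eta_1, \ldots, \eta_n)$ in $\cH^{\oplus n}$ turns the formula for $\psi$ into
\[
\psi(C) = \sum_{i,j = 1}^n \<\pi(c_{ij})\xi_j, \eta_i\>
\]
for every $C = (c_{ij}) \in M_n(\cA)$.

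Next I let $P$ be the orthogonal projection onto the linear span of $\{\xi_1, \ldots, \xi_n\}$ and $Q$ the orthogonal projection onto the span of $\{\eta_1, \ldots, \eta_n\}$; each has rank at most $n$. Defining $\Psi(A) = Q\pi(A)P$ yields a completely contractive map, since compression by projections on either side is completely contractive. Because $P\xi_j = \xi_j$ and $Q\eta_i = \eta_i$ for every $i, j$, the identity above rewrites as $\psi(C) = \<\Psi_n(C)\xi_0, \eta_0\>$, so the vectors $\xi = \xi_0$ and $\eta = \eta_0$ fulfill the required formula.

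The remaining point, $\Psi(\cV) = 0$, is where the annihilation hypothesis on $\psi$ enters. For $A \in \cV$ and indices $i, j$, feed into $\psi$ the matrix in $M_n(\cV)$ having $A$ in the $(i,j)$-entry and zeros elsewhere; the result is $\<\pi(A)\xi_j, \eta_i\> = 0$, so $\pi(A)\xi_j \perp \eta_i$ for all $i, j$. This says exactly that $Q\pi(A)P$ annihilates every $\xi_j$, and hence the whole range of $P$, giving $\Psi(A) = 0$. The main obstacle in the argument is thus the first step --- producing the extended GNS representation for a (non-state) functional of norm one; once this is in hand, everything else is a routine unwinding of the tensor structure of $M_n(\cA)$.
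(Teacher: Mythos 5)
Your proposal is correct and follows essentially the same route as the paper: realize $\psi$ as a vector functional $\<\rho(\cdot)\xi_0,\eta_0\>$ for a unital $*$-representation of $M_n(\cA)$, identify that representation with the $n$-fold amplification of a representation of $\cA$, take $P$ and $Q$ to be the projections onto the spans of the coordinate vectors, and test $\psi$ against matrices supported in a single entry to get $\Psi(\cV)=0$. The only (immaterial) difference is in how you justify the vector-functional lemma --- you use polar decomposition of $\psi$ in the predual of $M_n(\cA)^{**}$, whereas the paper cites a proof based on the Jordan decomposition of a Hermitian functional.
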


\begin{proof}
It is well-known that if $\cB$ is a unital C*-algebra and if $\th \in \cB'$ with $\|\th\| = 1$, then there
exist a unital $*$-representation, $(\rho, \cK)$, of $\cB$ and unit vectors $\xi^0$ 
and $\eta^0$ in $\cK$, such that $\th(B) = \<\rho(B)\xi^0, \eta^0\>$ for all $B \in \cB$.
See lemma 3.3 of \cite{R24} for a proof of this fact whose main tool is 
just the Jordan decomposition of a Hermitian linear functional
into the difference of two positive linear functionals. 
Accordingly, we can choose a unital $*$-representation $(\cK, \rho)$ of $M_n(\cA)$, 
and unit vectors 
$\xi^0$ and $\eta^0$ in $\cK$, such that 
\[
\psi(C) = \<\rho(C)\xi^0, \eta^0\>
\]
for all $C \in M_n(\cA)$.

Let $\{E_{jk}\}$ be the standard matrix-units for $M_n \subseteq M_n(\cA)$. 
Then $\rho(E_{11})$
is a projection in $\cL(\cK)$. Set $\cH = \rho(E_{11})\cK$. Define a unital 
$*$-representation
, $\pi$, of $\cA$ on $\cH$ by $\pi(A) = \rho(A\otimes E_{11})$ where here
we view $M_n(\cA)$ as $A \otimes M_n$. Then it is well-known and easily checked that
$(\cK, \rho)$ is unitarily equivalent to $(\cH^{\oplus n}, \pi_n)$, where by $\pi_n$ we mean the representation of $M_n(\cA)$ on $\cH^{\oplus n}$ defined by the
matrices
\[
\pi_n(C) = \{\pi(C_{jk})\}
\]
for $C \in M_n(\cA)$ and $C = \{C_{jk}\}$ with $C_{jk} \in \cA$,
and where the matrix $\{\pi(C_{jk})\}$ acts on $\cH^{\oplus n}$ in the evident
way. (This is, for example, essentially proposition 5ii of chapter I
of \cite{DvN}.) In particular, there will be unit vectors $\xi$ and $\eta$ in
$\cH^{\oplus n}$ such that
\[
\psi(C) = \<\pi_n(C)\xi, \eta\>
\]
for all $C\in M_n(\cA)$. 

Let $\xi = \{\xi_k\}$ and $\eta = \{\eta_j\}$ for $\xi_k, \eta_j \in \cH$. (Note
that the $\xi_k$'s are generally not orthogonal, and some may be 0,
and similarly for the $\eta_j$'s.) Then for $C = \{C_{jk}\}$ as above, we have
\[
\psi(C) = \<\pi_n(C)\xi, \eta\> = \Sigma_{jk}\<\pi(C_{jk})\xi_k, \eta_j\>  .
\]
Let $D \in \cV$, and for fixed $p$ and $q$ with $1\leq p, q \leq n$ let 
$C = D\otimes E_{pq}$, so that $C \in M_n(\cV)$. Then by assumption
on $\psi$
\[
0 = \psi(C) = \<\pi(D)\xi_q, \eta_p\>.
\]
Thus for all $p$ and $q$ we have
\[
\<\pi(\cV)\xi_q, \eta_p\> = 0 .
\]
Let $P$ and $Q$ be the projections onto, respectively, the linear spans of
$\{\xi_k\}$ and $\{\eta_j\}$. Thus $P$ and $Q$ are projections on $\cH$
of rank at most $n$. Furthermore, the fact that $\<\pi(D)\xi_q, \eta_p\> = 0$
for all $D \in \cV$ and all $p$ and $q$ implies that
\[
Q\pi(D)P = 0
\]
for all $D \in \cV$.

Define the linear mapping $\Psi$ from $\cA$ into $\cL(\cH)$ by
\[
\Psi(A) = Q\pi(A)P
\]
for all $A \in \cA$. Then it is standard and easily checked that
$\Psi$ is completely contractive. Of course, $\Psi(\cV) = 0$.
Furthermore, if we let $\Psi_n$ be the corresponding mapping
from $M_n(A)$ into $M_n(\cL(\cH))$, and if we let $P_n$ and $Q_n$
denote the diagonal $n \times n$ matrices with $P$, respectively
$Q$, in each diagonal entry, then 
\[
\Psi_n(C) = Q_n\pi_n(C)P_n
\]
for all $C \in M_n(\cA)$, where $\pi_n$ is as defined earlier in this proof. Note that 
$P_n\xi = \xi$ and $Q_n\eta = \eta$. Then as above
\[
\psi(C) = \<\pi_n(C)\xi, \eta\> = \<Q_n\pi_n(C)P_n\xi, \eta\> = \<\Psi_n(C)\xi, \eta\>
\]
for all $C \in M_n(\cA)$, as desired.
\end{proof}

We remark that if for each non-zero $\xi_k$ we let $P_k$ be the rank-one projection
with $\xi_k$ in its range, and if we define $Q_j$ similarly for $\eta_j$, then the above proposition can be reformulated in terms of the complete contractions $\Phi_{jk}(A) = Q_j\pi(A)P_k$. But this reformulation seems to be a bit more complicated.

For each natural number $n$ let $(M_n(\cV))^\perp$ denote the linear
subspace of $(M_n(\cA))'$ consisting of the linear functionals that take 
value 0 on $M_n(\cV)$. By the Hahn-Banach theorem, for each 
$C \in M_n(\cA)$ there is a $\psi \in (M_n(\cV))^\perp$ 
such that $\|\psi\| = 1$ and
$\psi(C) = \|C\|_{\cA/\cV}$, where $\|\cdot\|_{\cA/\cV}$ denotes the quotient
norm on $M_n(\cA)/M_n(\cV)$ pulled back to $M_n(\cA)$. 
Thus we can choose, in many ways, a
subset, $\cS_n^\cV$, of elements of $(M_n(\cV)^\perp$ of norm 1 such 
that for every $C \in M_n(\cA)$ we have
\[
 \|C\|_{\cA/\cV} = \sup\{|\psi(C)|: \psi \in \cS_n^\cV \}   . 
\]
For example, $\cS_n^\cV$ could consist of all elements $\psi$ of
$(M_n(\cV))^\perp$ of norm 1, or of a norm-dense subset thereof, or of the
set of extreme points of the unit ball of $(M_n(\cV))^\perp$. For each
such $\psi$ we obtain from the above proposition a representation
$(\cH^\psi, \pi^\psi)$ and projections $P^\psi$ and $Q^\psi$ on $\cH^\psi$,
and the corresponding completely contractive mapping $\Psi^\psi$
from $\cA$ into $\cL(\cH^\psi)$ defined by
\[
\Psi^\psi(A) = Q^\psi\pi^\psi(A)P^\psi  .
\]
Let $\cH^{\cV, n} = \bigoplus \{\cH^\psi : \psi \in \cS_n^\cV\}$, the Hilbert
space direct sum, and let 
$\pi^{\cV, n} = \bigoplus\{\pi^\psi : \psi \in \cS_n^\cV\}$ be the 
corresponding representation of $\cA$ on $\cH^{\cV, n}$. Let
$P^{\cV, n} = \bigoplus\{P^\psi:\psi \in \cS_n^\cV\}$, and define
$Q^{\cV, n}$ similarly. Then define $\Psi^{\cV, n}$ by
\[
\Psi^{\cV, n}(A) = Q^{\cV, n} \pi^{\cV, n}(A)P^{\cV, n}
\]
for all $A \in \cA$. Then from the requirements on $\cS_n^\cV$ it is clear that
for every $C\in M_n(\cA)$ we have
\[
\|C\|_{\cA/\cV} = \|\Psi_n^{\cV, n}(C)\|  .
\]

Now let $\cH^\cV = \bigoplus\{\cH^{\cV, n} : n \in \mathbb N\}$, let
$\pi^\cV = \bigoplus \{\pi^{\cV, n} : n \in \mathbb N\}$, and define projections
$P^\cV$ and $Q^\cV$ on $\cH^\cV$ similarly. Then from the
above considerations we see that we obtain:

\begin{theorem}
\label{thmconc}
Let $\cA$ be a unital C*-algebra, let $\cV$ be a norm-closed
subspace of $\cA$, and equip $\cA/\cV$ with the 
corresponding quotient matrix norm. 
Then the constructions above provide a
unital $*$-representation $(\cH^\cV, \pi^\cV)$ of $\cA$,
and projections $P^\cV$ and $Q^\cV$ on $\cH^\cV$, such that
the linear mapping $\Psi^\cV$ from $\cA$ to $\cL(\cH^\cV)$ defined
by
\[
\Psi^\cV(A) = Q^\cV\pi^\cV(A)P^\cV
\]
gives a complete isometry from $\cA/\cV$ into $\cL(\cH^\cV)$.
\end{theorem}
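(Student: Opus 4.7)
The plan is essentially to verify that the direct-sum construction carried out just before the theorem already packages everything needed, so the work consists of observing that three properties hold for $\Psi^\cV$: it is completely contractive, it annihilates $\cV$, and for each matrix level $n$ it is isometric on $M_n(\cA)/M_n(\cV)$.

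First I would note that each $\Psi^\psi$ supplied by Proposition \ref{ccmap} is completely contractive and satisfies $\Psi^\psi(\cV) = 0$, and that complete contractivity and the vanishing on $\cV$ are preserved under arbitrary direct sums of representations and projections. Hence $\Psi^{\cV,n}$ and $\Psi^\cV$ inherit these properties, which immediately gives $\|\Psi^\cV_n(C)\| \leq \|C\|_{\cA/\cV}$ for every $n$ and every $C \in M_n(\cA)$, and also gives a well-defined factorization through $\cA/\cV$.

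The key step is the reverse inequality at every matrix level. Fix $n$ and $C \in M_n(\cA)$. The construction of $\cS_n^\cV$ via Hahn-Banach ensures
\[
\|C\|_{\cA/\cV} = \sup\{|\psi(C)| : \psi \in \cS_n^\cV\},
\]
and by the final identity of Proposition \ref{ccmap}, each such $\psi$ satisfies $|\psi(C)| = |\langle \Psi^\psi_n(C)\xi^\psi, \eta^\psi\rangle| \leq \|\Psi^\psi_n(C)\|$ for suitable unit vectors. Thus $\|C\|_{\cA/\cV} \leq \sup_\psi \|\Psi^\psi_n(C)\|$. Since the summands $\Psi^\psi$ embed as direct summands of $\Psi^{\cV,n}$, and $\Psi^{\cV,n}$ in turn embeds as a direct summand of $\Psi^\cV$, the norm of $\Psi^\cV_n(C)$ dominates the supremum on the right. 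Combining with the contractive bound gives $\|\Psi^\cV_n(C)\| = \|C\|_{\cA/\cV}$.

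I do not expect any serious obstacle: Proposition \ref{ccmap} is the entire technical engine, and the remaining content is a standard Hahn-Banach plus direct-sum argument. The only point requiring a modicum of care is the bookkeeping that shows the norm at the $n$-th matrix level of the global $\Psi^\cV$ dominates the $n$-th matrix norm of the partial object $\Psi^{\cV,n}$; this follows from the observation that compressing a direct sum of operators to a single summand can only decrease the operator norm, applied entry-wise to matrices.
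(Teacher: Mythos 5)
Your proposal is correct and follows the same route as the paper: the paper's proof of this theorem is precisely the construction preceding its statement (Hahn--Banach choice of $\cS_n^\cV$, the identity $\|C\|_{\cA/\cV}=\|\Psi_n^{\cV,n}(C)\|$, and the direct sum over all $n$), and your three observations — complete contractivity and vanishing on $\cV$ pass to direct sums, the pairing with unit vectors from Proposition \ref{ccmap} gives the lower bound, and a direct summand's norm is dominated by the norm of the whole sum — are exactly the bookkeeping the paper leaves implicit. No gaps.
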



\section{Quotients of operator $*$-spaces}
\label{secstar}

My principal aim is to understand quotients of the form $\cA/\cB$ where
$\cA$ is a C*-algebra and $\cB$ is a C*-subalgebra of $\cA$. 
But both $\cA$ and $\cB$ are stable under $^*$, and so we
will consider first quotients under just that requirement.

\begin{definition}
By a \emph{concrete operator $*$-space} we mean a subspace $\cW$ of
some C*-algebra $\cA$ that is stable under $^*$, that is, if $A \in \cW$
then $A^* \in \cW$. 
\end{definition}

If $\cW$ is a $*$-stable subspace of some C*-algebra $\cA$, then 
$M_n(\cW)$ is a $*$-stable subspace of $M_n(\cA)$ for 
each natural number $n$, and the restriction to $M_n(\cW)$ of the
norm on $M_n(\cA)$ will be a $*$-norm in the
sense that $\|C^*\|_n = \|C\|_n$ for all $C \in M_n(\cW)$. 

By a \emph{vector $*$-space} we mean (definition 3.1 of \cite{PlT}) a vector space
$\cW$ over $\mathbb C$ that is equipped with a $*$-operation
(i.e. involution) satisfying the usual properties. Then $M_n(\cW)$
is also canonically a vector $*$-space where $(C^*)_{jk} = (C_{kj})^*$
for $C = \{C_{jk}\}$ as one would expect.

\begin{definition}
Let $\cW$ be a vector $*$-space. By a \emph{matrix $*$-norm} on
$\cW$ we mean a matrix norm $\{\|\cdot\|_n\}$ on $\cW$ such that
each $\|\cdot\|_n$ is a $*$-norm. By an \emph{abstract operator
$*$-space} we mean a vector $*$-space that is equipped with a 
matrix $*$-norm that satisfies Ruan's axioms.
\end{definition}

Let $\cW$ be an operator $*$-space, and let $\cV$ be a closed
$*$-subspace of $\cW$ (that is, $\cV$ is stable under the involution
on $\cW$). Then the involution on $\cW$ gives an involution on
$\cW/\cV$ in the evident way, so that $\cW/\cV$ is a vector $*$-space.
Then the quotient norm from each $\|\cdot\|_n$ will be a $*$-norm.
In this way $\cW/\cV$ is an abstract operator $*$-space.

We will now show that if $\cW$ is a concrete operator $*$-space
then we can use the results of the previous section to obtain a 
completely isometric $*$-representation of $\cW/\cV$ as a
concrete operator $*$-space. As in the previous section, it suffices 
to do this for the case in which $\cW$ is a unital C*-algebra
$\cA$. So we now treat that case. Then by Theorem \ref{thmconc}
there exist a $*$-representation $(\cH, \pi)$ of $\cA$ and
projections $P$ and $Q$ in $\cL(\cH)$ such that the linear map
$\Psi: \cA \to \cL(\cH)$ defined by
\[
\Psi(A) = Q\pi(A)P
\]
gives a complete isometry from $\cA/\cV$ into $\cL(\cH)$.
Define $\Psi^*$ by $\Psi^*(A) = (\Psi(A^*))^*$ as usual. 
Notice that $\Psi^*(A) = P\pi(A)Q$ for all $A \in \cA$, and
that $\Psi^*(\cV) = 0$. 
Define $\Phi: \cA \to \cL(\cH \oplus \cH)$ by
\[
\Phi(A) =\begin{pmatrix}  0 & P\pi(A)Q \\
                                          Q\pi(A)P & 0       \end{pmatrix}   .
                                          \]
Then it is easily seen that $\Phi$ is a $*$-map. Clearly $\Phi$
is contractive, and it is a complete isometry since $\Psi$ is.
We can rewrite $\Phi$ as
\[
\Phi(A) = \begin{pmatrix}  P & 0 \\
                                          0 & Q       \end{pmatrix} 
               \begin{pmatrix}  0 & \pi(A) \\
                                          \pi(A) & 0       \end{pmatrix}   
                \begin{pmatrix}  P & 0 \\
                                          0 & Q       \end{pmatrix}    ,                                                  
\]
and we see that ($ \begin{smallmatrix}  P & 0 \\
                                          0 & Q       \end{smallmatrix} $) is itself
a projection. But 
($  \begin{smallmatrix}  0 & \pi(A) \\
                                          \pi(A) & 0       \end{smallmatrix}   $)
does not quite give a $*$-representation of $\cA$. It is thus more 
attractive to rewrite $\Phi$ as
\[                                        
\Phi(A) = \begin{pmatrix}  P & 0 \\
                                          0 & Q       \end{pmatrix} 
                \begin{pmatrix}  0 & 1 \\
                                          1 & 0       \end{pmatrix}                            
               \begin{pmatrix}   \pi(A) & 0 \\
                                          0 & \pi(A)       \end{pmatrix}   
                \begin{pmatrix}  P & 0 \\
                                          0 & Q       \end{pmatrix}    ,                                                  
\]
and to notice that ($ \begin{smallmatrix}  0 & 1 \\
                                          1 & 0       \end{smallmatrix}    $)
is a Hermitian unitary on $\cH \oplus \cH$ that commutes with
the representation $\pi \oplus \pi$ of $\cA$. This puts $\Phi$  
into the ``commutant representation'' form given in theorems 2.2,
2.9 and 2.10
of \cite{PlS}. On changing the meaning of the various symbols
$\cH$, $\pi$, $P$, etc, we thus obtain:

\begin{theorem}
\label{thmstar}
 Let $\cA$ be a unital C*-algebra, let $\cV$ be a closed 
 $*$-subspace of $\cA$, and equip $\cA/\cV$ with the 
corresponding quotient matrix norm (so that $\cA/\cV$ is an operator
 $*$-space). Then there exist a unital $*$-representation
 $(\cH, \pi)$ of $\cA$, a projection $P$ on $\cH$, and a Hermitian
 unitary
 operator $U$ on $\cH$ that commutes with the representation 
 $\pi$, such that the linear $*$-map $\Psi$ from
 $\cA$ into $\cL(\cH)$ defined by
 \[
 \Psi(A) = PU\pi(A)P  ,
 \]
 gives a completely isometric $*$-map from $\cA/\cV$
 onto a $*$-subspace of $\cL(\cH)$.
 \end{theorem}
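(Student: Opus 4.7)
The plan is to apply Theorem~\ref{thmconc} and then symmetrize the resulting complete isometry so that it becomes a $*$-map while retaining isometry on all matrix levels. Theorem~\ref{thmconc} supplies a unital $*$-representation $(\cH, \pi)$ of $\cA$ and projections $P$, $Q$ on $\cH$ such that $\Psi(A) = Q\pi(A)P$ is a complete isometry from $\cA/\cV$ into $\cL(\cH)$. I would first set $\Psi^*(A) = (\Psi(A^*))^* = P\pi(A)Q$ and note that $\Psi^*$ also vanishes on $\cV$ because $\cV$ is $*$-stable. Moreover, the entry-wise identity $(\Psi^*)_n(C) = (\Psi_n(C^*))^*$ together with the $*$-invariance of the quotient C*-norm on $M_n(\cA)/M_n(\cV)$ (which holds because $M_n(\cV)$ is $*$-stable) gives $\|(\Psi^*)_n(C)\| = \|C^*\|_{\cA/\cV} = \|C\|_{\cA/\cV}$ for every $C \in M_n(\cA)$.

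Next I would form the off-diagonal map $\Phi : \cA \to \cL(\cH \oplus \cH)$ by
\[
\Phi(A) = \begin{pmatrix} 0 & \Psi^*(A) \\ \Psi(A) & 0 \end{pmatrix}.
\]
This is manifestly a linear $*$-map that vanishes on $\cV$. Applying $\Phi$ entry-wise over $M_n$ and using the canonical shuffle identification $M_n(\cL(\cH \oplus \cH)) \cong M_2(\cL(\cH^{\oplus n}))$, the element $\Phi_n(C)$ becomes unitarily equivalent to the block matrix $\bigl(\begin{smallmatrix} 0 & (\Psi^*)_n(C) \\ \Psi_n(C) & 0\end{smallmatrix}\bigr)$, whose operator norm equals $\max(\|\Psi_n(C)\|, \|(\Psi^*)_n(C)\|) = \|C\|_{\cA/\cV}$ by the previous paragraph. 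Hence $\Phi$ descends to a completely isometric $*$-map from $\cA/\cV$ into $\cL(\cH \oplus \cH)$.

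Finally I would rewrite $\Phi$ in the factored form
\[
\Phi(A) = \begin{pmatrix} P & 0 \\ 0 & Q \end{pmatrix} \begin{pmatrix} 0 & 1 \\ 1 & 0 \end{pmatrix} \begin{pmatrix} \pi(A) & 0 \\ 0 & \pi(A) \end{pmatrix} \begin{pmatrix} P & 0 \\ 0 & Q \end{pmatrix},
\]
noting that $\diag(P,Q)$ is a projection, the swap matrix $\bigl(\begin{smallmatrix} 0 & 1 \\ 1 & 0 \end{smallmatrix}\bigr)$ is a Hermitian unitary, and that it commutes with $\pi \oplus \pi$ because the two diagonal blocks of $\pi \oplus \pi$ coincide. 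Relabelling $(\cH \oplus \cH, \pi \oplus \pi)$ as $(\cH, \pi)$, $\diag(P,Q)$ as $P$, and the swap as $U$ yields the map $\Psi(A) = PU\pi(A)P$ of the statement; the image is automatically a $*$-subspace of $\cL(\cH)$ because $\Psi$ is a $*$-map. The only step requiring care is the complete isometry assertion for $\Phi$, where the key point is that $*$-stability of $\cV$ makes $\Psi$ and $\Psi^*$ have the same matricial norms on $\cA/\cV$, so passing to the symmetric off-diagonal combination does not deteriorate the isometry; all the rest is routine block matrix bookkeeping.
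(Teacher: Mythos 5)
Your proposal is correct and follows essentially the same route as the paper: apply Theorem~\ref{thmconc}, form $\Psi^*(A)=P\pi(A)Q$, assemble the off-diagonal $*$-map $\Phi$ on $\cH\oplus\cH$, and factor it as $\diag(P,Q)\cdot(\text{swap})\cdot(\pi\oplus\pi)(A)\cdot\diag(P,Q)$ with the swap serving as the Hermitian unitary $U$ commuting with $\pi\oplus\pi$. Your added justification of the complete isometry of $\Phi$ (via the canonical shuffle, the norm of an off-diagonal $2\times 2$ block being the maximum of the norms of its entries, and the $*$-invariance of the quotient matrix norm) simply fills in a step the paper leaves as ``it is a complete isometry since $\Psi$ is.''
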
                      
 
 Notice that we can cut down to the closure of $\pi(\cA)P\cH$, that is, 
 we can assume that $\pi(\cA)P\cH$ is dense in $\cH$.       
 
 Let $E$ and $F$ be the projections onto the two eigensubspaces of
 $U$, so that $U = E - F$ and $E + F = I_\cH$. Then we can decompose $\Psi$ as
 \[
     \Psi(A) = PE\pi(A)EP  \ - \ PF\pi(A) FP  .
 \]            
 The two terms on the right give completely positive maps. Thus this
 decomposition can be viewed as an analogue for $\Psi$ of the
 Jordan decomposition of a signed measure. But note that $PE$
 is not in general a projection.

\section{Quotients of operator systems}
\label{secsys}

In this section we consider quotients of operator systems.
As before, it suffices for us to consider $\cV$ as a 
subspace of a C*-algebra $\cA$.
Thus we assume that $\cV$ is an operator system in $\cA$, that
is, that $\cV$ is a closed $*$-subspace that contains the identity element,
$1_\cA$, of $\cA$. On applying Theorem \ref{thmstar}, with the notation
used there, we obtain a completely isometric embedding of $\cA/\cV$
into $\cL(\cH)$ given by a map $\Psi: \cA \mapsto \cL(\cH)$ defined by
 \[
 \Psi(A) = PU\pi(A)P  .
 \]
 The extra information that we obtain from having $1_\cA \in \cV$ is that
 \[
 0 = \Psi(1_\cA) = PUP.
\]
From this and the fact that $U$ commutes with $\pi(A)$ we see that 
\begin{eqnarray}
\Psi(A)  & = & P\pi(A)UP \ - \ PUP\pi(A)   \nonumber   \\ 
& = & \ P[\pi(A), UP] \ = \ PU[\pi(A), P]. 
\label{eqder}
\end{eqnarray}
Let $X = 2P-I$, so that $X$ is a Hermitian unitary. Then
it follows that we can express $\Psi$ by
\[
\Psi(a) = - (1/2)PU[X, \pi(A)].
\] 
We can equally well express $\Psi$ by
\begin{eqnarray}
\Psi(A)  & = & PU\pi(A)P \ - \ \pi(A)PUP   \nonumber   \\ 
& = & \ [PU, \pi(A)]P \ = \ [P, \pi(A)]UP.   \nonumber
\end{eqnarray}
On adding the third term of this equation to that of equation \eqref{eqder}
we obtain
\[
\Psi(A) = P[ \ [P, U]/2, \pi(A)]P    .
\]
Set $Z = [P, U] = [2P-I, U]/2 = [X, U]/2$. Clearly $Z^* = -Z$ and $\|Z\| \leq 1$.
Furthermore, $[U, P^2] = [U, P]P + P[U, P]$ so that $PZ = Z(I-P)$. 
Thus we obtain:

\begin{theorem}
\label{thmsys}
 Let $\cA$ be a unital C*-algebra, let $\cV$ be an operator system
 in $\cA$, and equip $\cA/\cV$ with the 
corresponding quotient matrix norm (so that $\cA/\cV$ is an abstract operator
 $*$-space). Then there exist a unital $*$-representation
 $(\cH, \pi)$ of $\cA$, a projection $P$ on $\cH$, and an
 operator $Z$ on $\cH$ satisfying $Z^* = -Z$ and $\|Z\| \leq 1$
 and $PZ = Z(I-P)$,
 such that the linear $*$-map $\Psi$ from
 $\cA$ into $\cL(\cH)$ defined by
 \[
 \Psi(A) = (1/2)P[Z, \pi(A)]P  
 \]
 gives a completely isometric $*$-map from $\cA/\cV$
 onto a $*$-subspace of $\cL(\cH)$.
 \end{theorem}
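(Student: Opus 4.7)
The plan is to derive Theorem~\ref{thmsys} from Theorem~\ref{thmstar} by exploiting the extra hypothesis that $1_\cA \in \cV$. Applying Theorem~\ref{thmstar} produces a unital $*$-representation $(\cH, \pi)$ of $\cA$, a projection $P$ on $\cH$, and a Hermitian unitary $U$ on $\cH$ commuting with $\pi(\cA)$, such that $\Psi_0(A) := PU\pi(A)P$ is a completely isometric $*$-map of $\cA/\cV$ into $\cL(\cH)$. Since $1_\cA \in \cV$, the identity $\Psi_0(1_\cA) = PUP = 0$ holds, and this is the key additional fact that powers the rest of the argument.

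The next step is to massage $\Psi_0(A) = PU\pi(A)P$ into a commutator form. Using $PUP = 0$ one finds
\[
\Psi_0(A) = PU\pi(A)P - PUP\pi(A) = PU[\pi(A), P],
\]
and, using also that $U$ commutes with $\pi(\cA)$, similarly $\Psi_0(A) = P\pi(A)UP - \pi(A)PUP = [P, \pi(A)]UP$. Adding these two equal expressions, together with the two simplifications $P[P,U] = PU$ and $[P,U]P = -UP$ (both immediate from $P^2 = P$ and $PUP = 0$), collapses the sum to $P[[P,U], \pi(A)]P$. Therefore
\[
\Psi_0(A) \ = \ \tfrac12\, P\bigl[[P,U],\,\pi(A)\bigr]P.
\]
Setting $Z := [P, U]$ gives the formula $\Psi(A) = (1/2)P[Z, \pi(A)]P$ claimed in the theorem, with $\Psi = \Psi_0$, so the complete isometry and $*$-map properties transfer directly from Theorem~\ref{thmstar}.

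Finally I would check the three properties of $Z$. Self-adjointness of $P$ and $U$ gives at once $Z^* = (PU - UP)^* = UP - PU = -Z$. For the other two properties the cleanest route is the block decomposition of $\cH$ relative to $P$: the identity $PUP = 0$ forces $U$ to have off-diagonal shape with $(1,2)$-block $B := PU(I-P)$ and $(2,1)$-block $B^*$, whence $Z = PU - UP$ is purely off-diagonal with blocks $B$ and $-B^*$. From this one reads off $\|Z\|^2 = \|B\|^2 \le \|U\|^2 = 1$, and verifies that $PZ$ and $Z(I-P)$ both equal the operator whose single non-zero block is $B$ in position $(1,2)$. The same conclusions can also be obtained by direct algebraic manipulation from $Z = PU - UP$, $P^2 = P$, and $PUP = 0$.

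The principal obstacle is essentially bookkeeping: selecting the right pair of intermediate forms for $\Psi_0(A)$ so that their sum produces a double commutator nicely sandwiched between $P$'s, and then tracking signs in the expansion of $P[[P,U],\pi(A)]P$. Once those two expressions are in hand the identification is a short calculation and the verification of the properties of $Z$ is routine.
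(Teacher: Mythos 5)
Your proof is correct and follows essentially the same route as the paper: apply Theorem~\ref{thmstar}, use $\Psi(1_\cA)=PUP=0$ to rewrite $PU\pi(A)P$ in the two commutator forms $PU[\pi(A),P]$ and $[P,\pi(A)]UP$, average them to obtain $(1/2)P[[P,U],\pi(A)]P$, and set $Z=[P,U]$. One minor slip: $PUP=0$ does not force $U$ itself to be off-diagonal relative to $P$ (the block $(I-P)U(I-P)$ can be nonzero), but this is immaterial since $Z=PU-UP$ is nonetheless off-diagonal with blocks $B$ and $-B^*$ by direct computation -- alternatively, as the paper does, $\|Z\|\le 1$ follows from $Z=[2P-I,U]/2$ and $PZ=Z(I-P)$ from $[U,P]=[U,P^2]=[U,P]P+P[U,P]$.
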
                      

We remark that a quite different type of quotient involving operator systems, in which one
wants the quotient of an operator system by the kernel of a completely positive map 
to be an operator system, is studied in \cite{KPT, FrP}.


\section{Quotients by C*-subalgebras}
\label{secsubalg}

In this section we assume that $\cA$ is a unital C*-algebra and
that $\cB$ is a unital C*-subalgebra of $\cA$ (so $1_\cA \in \cB$).
Since $\cB$ is, in particular, a $*$-subspace of $\cA$, Theorem 
\ref{thmstar} is applicable, and, with the notation used there, we have
a completely isometric embedding of $\cA/\cB$ into
$\cL(\cH)$ given by the map $\Psi: \cA \to \cL(\cH)$ defined by
\[
\Psi(A) = PU\pi(A)P   .
\]

Now let $\hat P$ be the projection onto the closed linear span
of $\pi(\cB)P\cH$. Since the range of $\hat P$ is $\pi(\cB)$-invariant,
$\hat P$ commutes with $\pi(B)$ for all $B \in \cB$. Because
$1_\cA \in \cB$, the range of $\hat P$ contains $P\cH$, and
so $\hat P \geq P$. From the fact that $0 = \Psi(\cB) = PU\pi(\cB)P$
and that $\cB$ is an algebra it
is easily seen that $PU\pi(\cB)\hat P = 0$. On taking adjoints, we
have $\hat P U\pi(\cB)P = 0$, and so in the same way as above
we have $\hat P U\pi(\cB)\hat P = 0$. Define 
$\hat \Psi: \cA \to \cL(\cH)$ by
\[
\hat \Psi(A) = \hat P U \pi(A)\hat P   .
\]
Clearly $\hat \Psi$ is completely contractive and $\hat \Psi(\cB) = 0$. From the fact 
that $\hat P \geq P$ we see that $\|\hat \Psi (A)\| \geq \|\Psi (A)\|$
for all $A \in \cA$, and it is easily seen that in fact
$\|\hat \Psi_n (C)\| \geq \|\Psi_n (C)\|$ for all natural numbers $n$
and all $C \in M_n(\cA)$. Since $\Psi$ gives a complete isometry
from $\cA/\cB$ into $\cL(\cH)$, it follows that $\hat \Psi$ does also.

Now let $X = 2\hat P \ - \ I$. Then $X$ is a Hermitian unitary
in $\cL(\cH)$ that commutes with $\pi(B)$ for every
$B \in \cB$. Notice that because $1_\cA \in \cB$ we have
$\hat P  U \hat P \ = \ 0$. Then much as in the calculation for
equation \eqref{eqder} we find that
\[
\hat \Psi(A) = -(1/2)\hat P U[X \ , \ \pi(A)] .
\]
It follows that $\|[X, \ \pi(A)]\| \geq 2\|\hat \Psi(A)\|$ for
all $A \in \cA$.
Define a derivation, $\Theta$, from $\cA$ into $\cL(\cH)$ by
\[
\Theta(A) \ = \ (1/2)[X, \ \pi(A)]   
\]
for all $A \in \cA$. Then $\|\Theta(A)\| \geq \|\hat \Psi(A)\|$
for all $A \in \cA$.
Furthermore, $\Theta$ is completely contractive. To see this,
notice that it is the composition of $\pi$ with a corner of
the completely positive contraction that sends 
($ \begin{smallmatrix}  a & c \\
                             b & d       \end{smallmatrix}    $) in $M_2(\cA)$ to
\[
(1/2) \begin{pmatrix}  I & 0 \\
                                    0 & X       \end{pmatrix}    
 \begin{pmatrix}  a & c \\
                             b & d       \end{pmatrix}    
 \begin{pmatrix}  I & 0 \\
                           0 & X       \end{pmatrix}    
 \ + \ (1/2) \begin{pmatrix}  -X & 0 \\
                                    0 & I       \end{pmatrix}    
\begin{pmatrix}  a & c \\
                             b & d       \end{pmatrix}    
\begin{pmatrix}  -X & 0 \\
                              0 & I       \end{pmatrix}.                                                                                
\]
A slight modification of the calculations done a few lines above shows easily
that
$\|\Theta_n (C)\| \geq \|\hat \Psi_n (C)\|$ for all natural numbers $n$
and all $C \in M_n(\cA)$.
Notice that $\Theta(B) = 0$ for all $B \in \cB$
because $\hat P$ commutes with all of the elements of $\pi(\cB)$.
Since $\hat \Psi$ gives a complete isometry from $\cA/\cB$
into $\cL(\cH)$, it follows that $\Theta$ does also. Notice that
if we replace $X$ by $iX$ then $\Theta$ is a $*$-map. We
have thus obtained:

\begin{theorem}
\label{thmder}
Let $\cA$ be a unital C*-algebra, and let $\cB$ be a unital C*-subalgebra
of $\cA$ (so $1_\cA \in \cB$). Then there exist a unital $*$-representation
$(\cH, \pi)$ of $\cA$, and a Hermitian unitary operator $X$ on $\cH$
that commutes with $\pi(B)$ for all $B \in \cB$, such that the
derivation $\Theta$ from $\cA$ into $\cL(\cH)$ defined by
\[
\Theta(A) \ = \ (1/2)[iX, \ \pi(A)]   
\]
gives a completely isometric $*$-map from $\cA/\cB$ into
$\cL(\cH)$.
\end{theorem}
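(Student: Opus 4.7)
The plan is to bootstrap from Theorem \ref{thmstar}, which applies since $\cB$ is in particular a closed $*$-subspace of $\cA$. This gives a unital $*$-representation $(\cH,\pi)$ of $\cA$, a projection $P$, and a Hermitian unitary $U$ commuting with $\pi$, so that $\Psi(A)=PU\pi(A)P$ is a complete isometry from $\cA/\cB$ into $\cL(\cH)$. The new input we have not yet exploited is that $\cB$ is a subalgebra containing $1_\cA$, and the goal is to use this to replace $P$ by a projection $\hat P$ that actually commutes with $\pi(\cB)$, so that $\Psi$ becomes (up to a scalar) a commutator with $2\hat P-I$.

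First, I would let $\hat P$ be the projection onto the closed span of $\pi(\cB)P\cH$. Because $1_\cA\in\cB$ we have $\hat P\geq P$, and because the range of $\hat P$ is $\pi(\cB)$-invariant, $\hat P$ commutes with $\pi(\cB)$. Using $\Psi(\cB)=0$ and that $\cB$ is closed under products, one checks quickly that $\hat P U\pi(\cB)\hat P=0$: expand $\pi(B)$ for $B\in\cB$ against a vector $\pi(B')P\xi$ in the range of $\hat P$, and the product $BB'$ stays in $\cB$. Defining $\hat\Psi(A)=\hat P U\pi(A)\hat P$, the relation $\hat P\geq P$ yields $\|\hat\Psi_n(C)\|\geq\|\Psi_n(C)\|$ at every matrix level, while $\hat\Psi$ is completely contractive by construction; hence $\hat\Psi$ is also a complete isometry from $\cA/\cB$.

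Next, set $X=2\hat P-I$, a Hermitian unitary that commutes with $\pi(\cB)$. Because $1_\cA\in\cB$ implies $\hat P U\hat P=0$, the same algebraic manipulation that produced equation \eqref{eqder} for operator systems now gives
\[
\hat\Psi(A)=-(1/2)\hat P U[X,\pi(A)],
\]
so $\|[X,\pi(A)]\|\geq 2\|\hat\Psi(A)\|$, and more generally at every matrix level. Define $\Theta(A)=(1/2)[X,\pi(A)]$. Since $\hat P$ commutes with $\pi(\cB)$, we have $\Theta(\cB)=0$, and the domination $\|\Theta_n(C)\|\geq\|\hat\Psi_n(C)\|$ combined with the fact that $\hat\Psi$ is a complete isometry of $\cA/\cB$ forces $\Theta$ to be one as well, provided $\Theta$ is itself completely contractive.

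The main technical point is this last claim, that $\Theta$ is completely contractive. I would establish it by writing $\Theta$ as the composition of $\pi$ with a corner of a completely positive contraction on $M_2(\cA)$, explicitly via the two conjugations by $\diag(I,X)$ and $\diag(-X,I)$ averaged as displayed in the paragraph preceding the theorem; since $X$ is a Hermitian unitary, each conjugation is a complete isometry, and averaging them and extracting the off-diagonal corner produces exactly $(1/2)[X,\pi(\cdot)]$. This gives complete contractivity at every matrix level. Finally, to make $\Theta$ a $*$-map rather than merely linear, replace $X$ by $iX$: the commutator $(1/2)[iX,\pi(A)]$ has the same norm as $(1/2)[X,\pi(A)]$ (conjugation by the unitary $\diag(I,iI)$ exchanges them), so all previous estimates persist, and now $\Theta(A)^*=\Theta(A^*)$ because $(iX)^*=-iX$. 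This yields the statement.
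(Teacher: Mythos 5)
Your proposal is correct and follows essentially the same route as the paper: the same passage from $\Psi$ to $\hat\Psi$ via the projection $\hat P$ onto the closed span of $\pi(\cB)P\cH$, the same identity $\hat\Psi(A)=-(1/2)\hat P U[X,\pi(A)]$ with $X=2\hat P-I$, the same $M_2(\cA)$ completely positive contraction to prove complete contractivity of $\Theta$, and the same final substitution of $iX$ for $X$. No gaps.
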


This theorem is a strengthening of corollary 3.4 of \cite{R24},
and its proof is in part motivated by the proof of theorem 3.1
of \cite{R24}.

                                        

\begin{thebibliography}{10}

\bibitem{DvN}
J. Dixmier, \emph{Les alg\`ebres d'op\'erateurs dans l'espace hilbertien
  ({A}lg\`ebres de von {N}eumann)}, Cahiers scientifiques, Fascicule XXV,
  Gauthier-Villars, Paris, 1957. \MR{0094722 (20 \#1234)}

\bibitem{ER}
E.~G. Effros and Z.-J. Ruan, \emph{Operator spaces}, The Clarendon Press Oxford
  University Press, New York, 2000. \MR{1 793 753}
  
\bibitem{FrP}
D. Farenick and V. I. Paulsen,
\emph{Operator system quotients of matrix algebras and their tensor products},
Math. Scand. to appear, arXiv:1101.0790  
  
\bibitem{KPT}
 A. S. Kavruk, V. I. Paulsen, I. G. Todorov and M. Tomforde,
\emph{Quotients, exactness, and nuclearity in the operator system category},
arXiv:1008.2811.

\bibitem{PlS}
V.~I. Paulsen and C.~Y. Suen, \emph{Commutant representations of
  completely bounded maps}, J. Operator Theory \textbf{13} (1985), no.~1,
  87--101. \MR{768304 (86d:46052)}

\bibitem{PlT}
V.~I. Paulsen and M. Tomforde, \emph{Vector spaces with an order unit},
  Indiana Univ. Math. J. \textbf{58} (2009), no.~3, 1319--1359,
  arXiv:0712.2613. \MR{2542089 (2010f:46008)}

\bibitem{R7}
M.~A. Rieffel, \emph{Matrix algebras converge to the sphere for quantum
  {G}romov-{H}ausdorff distance}, Mem. Amer. Math. Soc. \textbf{168} (2004),
  no.~796, 67--91, arXiv:math.OA/0108005. \MR{2055928}

\bibitem{R17}
\bysame, \emph{Vector bundles and {G}romov-{H}ausdorff distance}, J. K-Theory
  \textbf{5} (2010), 39--103, arXiv:math.MG/0608266. \MR{2600284 (2011c:53085)}

\bibitem{R21}
\bysame, \emph{Leibniz seminorms for ``{M}atrix algebras converge to the
  sphere''}, Quanta of Maths,  Clay Mathematics Proceedings,
  vol.~11, Amer. Math. Soc., Providence, R.I., 2010, pp.~543--578, arXiv:0707.3229.
 \MR{2732064 (2011j:46125)},

\bibitem{R25}
\bysame, \emph{Distances between matrix algebras that converge to
  coadjoint orbits}, Superstrings, Geometry, Topology, and C*-algebras,
   (R.~S.~Doran, G.~Friedman, J.~Rosenberg eds.), Proceedings
  of Symposia in Pure Mathematics, vol.~81, Amer. Math. Soc., Providence, R.I., 2010, pp.~173--180, arXiv:0910.1968. \MR{2681764 (2011j:46126)}

\bibitem{R24}
\bysame, \emph{Leibniz seminorms and best approximation from
{C}*-subalgebras}, Sci. China Math. \textbf{54(11)} (2011), 2259--2274,
 arXiv:1008.3733.

\bibitem{Wuw1}
Wei Wu, \emph{Non-commutative metrics on matrix state spaces}, J. Ramanujan
  Math. Soc. \textbf{20} (2005), no.~3, 215--254, arXiv:math.OA/0411475.
  \MR{2181130}

\bibitem{Wuw2}
\bysame, \emph{Non-commutative metric topology on matrix state space}, Proc.
  Amer. Math. Soc. \textbf{134} (2006), no.~2, 443--453, arXiv:math.OA/0410587.
  \MR{2176013 (2006f:46072)}

\bibitem{Wuw3}
\bysame, \emph{Quantized {G}romov-{H}ausdorff distance}, J. Funct. Anal.
  \textbf{238} (2006), no.~1, 58--98, arXiv:math.OA/0503344. \MR{2234123}

\end{thebibliography}
                                        
\def\dbar{\leavevmode\hbox to 0pt{\hskip.2ex \accent"16\hss}d}
\providecommand{\bysame}{\leavevmode\hbox to3em{\hrulefill}\thinspace}
\providecommand{\MR}{\relax\ifhmode\unskip\space\fi MR }
\providecommand{\MRhref}[2]{%
  \href{http://www.ams.org/mathscinet-getitem?mr=#1}{#2}
}
\providecommand{\href}[2]{#2}

\end{document}